\title{Injective Envelopes and Projective Covers\\ of Quivers}
\author{Will Grilliette}
\newtheorem{thm}{Theorem}[subsection]
\newtheorem{prop}[thm]{Proposition}
\newtheorem{cor}[thm]{Corollary}
\theoremstyle{definition}
\newtheorem{defn}[thm]{Definition}
\newtheorem{defn2}{Definition}[section]
\theoremstyle{plain}
\newtheorem{prop2}[defn2]{Proposition}
\theoremstyle{remark}
\newtheorem{ex}[thm]{Example}
\newtheorem{ex2}[defn2]{Example}
\DeclareMathOperator{\ob}{Ob}
\DeclareMathOperator{\edges}{edges}
\DeclareMathOperator{\Ran}{ran}
\DeclareMathOperator{\card}{card}
\DeclareMathOperator{\indep}{indep}
\newcommand{\Set}{\mathbf{Set}}
\newcommand{\Quiv}{\mathbf{Quiv}}
\newcommand{\cat}[1]{\mathscr{#1}}
\begin{document}

\begin{abstract}
This paper characterizes the injective and projective objects in the category of directed multigraphs, or quivers.  Further, the injective envelope and projective cover of any quiver in this category is constructed.
\end{abstract}

\maketitle
\tableofcontents

\section{Introduction}

In several branches of mathematics, the concepts of injectivity and projectivity have found use due to their respective map lifting properties, characterizing several different examples.  Likewise, injective envelopes and projective covers describe different notions of completions.  Lists of such examples can be found in \cite[\S II.9.3, II.9.17, II.9.28]{joyofcats}.

This paper applies these notions to the category of directed multigraphs and their homomorphisms, characterizing the usual classes of injective and projective objects, as well as the respective envelope and cover.  These notions were previously considered in \cite[Ch.\ VI]{williams-1971}, though with a category of undirected graphs and with no discussion of the envelope or cover.  Specifically, the results in Propositions \ref{mono-inj} and \ref{epi-proj} are analogous to \cite[Theorems 6.3 and 6.1]{williams-1971}, respectively, though the differences in the categories prevents the single loop graph from being projective.  Moreover, the proofs of this paper are direct, appealing to the universal properties of known graphs to shorten the proofs.

The author would like to thank Drs.\ Deborah and Tyler Seacrest for the discussions from which this work arose.  In particular, the author attributes to them the coining of the terms ``loaded'' and ``explosion'' from Definitions \ref{loaded} and \ref{explosion}, respectively.

\section{Important Examples}

Recall the category of directed multigraphs, or quivers.  In this discussion, the following terminology and symbology will be used, where $\Set$ denotes the category of sets with functions.

\begin{defn2}[Quiver, {\cite[Definition 2.1]{graphtransformation}}]
A \emph{quiver} is a quadruple $(V,E,\sigma,\tau)$, where $V,E\in\ob(\Set)$ are sets, and $\sigma,\tau\in\Set(E,V)$ are functions.  Elements of $V$ are \emph{vertices}, and $V$ the \emph{vertex set}.  Elements of $E$ are \emph{edges}, and $E$ the \emph{edge set}.  The function $\sigma$ is the \emph{source map}, and $\tau$ the \emph{target map}.  For $e\in E$, $\sigma(e)$ is the \emph{source} of $e$, and $\tau(e)$ the \emph{target} of $e$.
\end{defn2}

\begin{defn2}[Quiver map, {\cite[Definition 2.4]{graphtransformation}}]
Given quivers $G$ and $H$, a \emph{quiver homomorphism} from $G$ to $H$ is a pair $\left(\phi_{V},\phi_{E}\right)$, where $\phi_{V}\in\Set\left(V_{G},V_{H}\right)$ and $\phi_{E}\in\Set\left(E_{G},E_{H}\right)$ satisfy $\phi_{V}\circ\sigma_{G}=\sigma_{H}\circ\phi_{E}$ and $\phi_{V}\circ\tau_{G}=\tau_{H}\circ\phi_{E}$.  The function $\phi_{V}$ is the \emph{vertex map}, and $\phi_{E}$ the \emph{edge map}.
\end{defn2}

For notation, let $\Quiv$ denote the category of quivers with quiver homomorphisms with the usual component-wise composition.  This category inherits a substantial amount of structure from $\Set$.  In particular, the standard universal constructions in $\Quiv$ mirror their counterparts in $\Set$, each done component-wise with the source and target maps defined appropriately as described in \cite[\S2.15]{borceux1}.  The characterizations are summarized in Table \ref{constructions}.  In particular, these characterizations guarantee that $\Quiv$ is complete and cocomplete as a category.

\begin{table}[b]\caption{Universal Constructions in $\Quiv$ and $\Set$}
\label{constructions}
\begin{center}\begin{tabular}{|c||c|c|}\hline
Construction	&	Characterization in $\Set$	&	Characterization in $\Quiv$\\ \hline\hline
equalizer	&	subset	&	subquiver\\ \hline
coequalizer	&	quotient set by	&	quotient quiver by\\
&	 an equivalence relation	&	a quiver equivalence relation\\ \hline
product	&	Cartesian product	&	Kronecker/tensor product\\ \hline
coproduct	&	disjoint union set	&	disjoint union quiver\\ \hline
\end{tabular}\end{center}
\end{table}

As in \cite[p.\ 106]{seic1985}, there are two natural projection functors $V,E:\Quiv\to\Set$, where one either ignores the edge structure or vertex structure, respectively.  Each functor is both a left and a right adjoint to a canonical construction of a quiver.  These constructions correspond to the left and right Kan extensions along each functor when $\Quiv$ is considered as a presheaf topos, done in \cite[\S 3.7]{borceux1}.

These canonical quivers will be the basis for both of the constructions in this paper, so each and its universal property will be described summarily.  The proof of each characterization is routine and will be omitted.

\begin{defn2}[Reflection quivers, {\cite[p.\ 20]{handbookgt}}]\label{reflections}
Fix a set $S$.
\begin{enumerate}

\item Let $\mathbf{0}_{S}:\emptyset\to S$ be the empty function to $S$.  The \emph{independent set of vertices} or \emph{empty quiver} on $S$ is
\[
I(S):=\left(S,\emptyset,\mathbf{0}_{S},\mathbf{0}_{S}\right),
\]
the quiver with vertex set $S$ and no edges.

\item Given $j=0,1$, let $\iota_{j}:S\to\{0,1\}\times S$ by $\iota_{j}(s):=(j,s)$ be the usual inclusions.  The \emph{independent set of edges} on $S$ is the quiver
\[
M(S):=\left(\{0,1\}\times S,S,\iota_{0},\iota_{1}\right).
\]

\item Given $j=0,1$, let $\pi_{j}:S^{2}\to S$ by $\pi_{1}(s,t):=s$ and $\pi_{2}(s,t):=t$ be the usual projections.  The \emph{(directed) complete graph} or \emph{full quiver} on $S$ is the quiver
\[
K(S):=\left(S,S^{2},\pi_{1},\pi_{2}\right).
\]

\item Let $\mathbb{1}:=\{1\}$ and $\mathbf{1}_{S}:S\to\mathbb{1}$ be the constant function from $S$.  The \emph{(directed) bouquet} on $S$ is
\[
B(S):=\left(\mathbb{1},S,\mathbf{1}_{S},\mathbf{1}_{S}\right),
\]
the quiver with edge set $S$ and one vertex.

\end{enumerate}
\end{defn2}

\begin{ex2} For concreteness, consider the set $S=\{a,b,c\}$.  Then, the special quivers above are drawn below.
\begin{enumerate}

\item $I(S)$:
\[\xymatrix{
a	&	b	&	c\\
}\]

\item $M(S)$:
\[\xymatrix{
(0,a)\ar[d]^{a}	&	(0,b)\ar[d]^{b}	&	(0,c)\ar[d]^{c}\\
(1,a)	&	(1,b)	&	(1,c)\\
}\]

\item $K(S)$:
\[\xymatrix{
\\
&	&	a\ar@/^/[ddrr]^{(a,c)}\ar@/^/[ddll]^{(a,b)}\ar@(ur,ul)_{(a,a)}\\
\\
b\ar@/^/[uurr]^{(b,a)}\ar@/^/[rrrr]^{(b,c)}\ar@(ul,dl)_{(b,b)}	&	&	&	&	c\ar@/^/[uull]^{(c,a)}\ar@/^/[llll]^{(c,b)}\ar@(dr,ur)_{(c,c)}\\
}\]

\item $B(S)$:
\[\xymatrix{
\\
1\ar@(ul,ur)^{a}\ar@(r,d)^{b}\ar@(d,l)^{c}\\
}\]

\end{enumerate}
\end{ex2}

\begin{prop2}[Universal Characterizations]
Let $G$ be a quiver and $S$ be a set.
\begin{enumerate}

\item Given any function $\phi:S\to V(G)$, there is a unique quiver homomorphism $\hat{\phi}:I(S)\to G$ such that $V\left(\hat{\phi}\right)=\phi$.
\item Given any function $\psi:S\to E(G)$, there is a unique quiver homomorphism $\hat{\psi}:M(S)\to G$ such that $E\left(\hat{\psi}\right)=\psi$.
\item Given any function $\chi:V(G)\to S$, there is a unique quiver homomorphism $\hat{\chi}:G\to K(S)$ such that $V\left(\hat{\chi}\right)=\chi$.
\item Given any function $\xi:E(G)\to S$, there is a unique quiver homomorphism $\hat{\xi}:G\to B(S)$ such that $E\left(\hat{\xi}\right)=\xi$.

\end{enumerate}
\end{prop2}

Since the set $S$ was arbitrary in each of the above constructions, the functors $V$ and $E$ have left adjoint functors $I,M:\Set\to\Quiv$ and right adjoint functors $K,B:\Set\to\Quiv$ defined on objects above.  Pictorially, these can be described below.
\[\xymatrix{
	&	\Quiv\ar[dl]_{V}\ar[dr]^{E}\\
\Set\ar@/^1.3pc/[ur]^{I}\ar@/_1.3pc/[ur]_{K}	&	&	\Set\ar@/^1.3pc/[ul]^{M}\ar@/_1.3pc/[ul]_{B}\\
}\]
These adjoint characterizations show that the ideas of ``independent set of vertices'', ``independent set of edges'', ``complete graph'', and ``bouquet'' arise naturally from the categorical structure of $\Quiv$.  This reinforces that all these classes of quivers are fundamental to graph theory.

\section{Injectivity}

For a category $\cat{C}$ and class of $\cat{C}$-morphisms $\Phi$, recall from \cite[Definition II.9.22]{joyofcats} that a $\cat{C}$-object $I$ is \emph{$\Phi$-injective} if given any $\phi\in\cat{C}(A,B)$ from $\Phi$ and $\psi\in\cat{C}(A,I)$, there is $\hat{\psi}\in\cat{C}(B,I)$ such that $\hat{\psi}\circ\phi=\psi$.  This is described in the commutative diagram below.
\[\xymatrix{
I\\
A\ar[u]^{\psi}\ar[r]_{\phi}	&	B\ar@{..>}[ul]_{\hat{\psi}}\\
}\]
The $\Phi$-injective envelope is then a ``minimal'' injective embedding.  The notion of minimality here is encoded by a $\Phi$-essential map.  Recall that a map $\phi\in\cat{C}(A,B)$ from $\Phi$ is \emph{$\Phi$-essential} if for all $C\in\ob(\cat{C})$ and $\alpha\in\cat{C}(B,C)$, $\alpha\circ\phi\in\Phi$ implies $\alpha\in\Phi$.

This section considers injectivity of quivers with respect to the class of all monomorphisms.  From \cite[Fact 2.15]{graphtransformation}, a quiver homomorphism $\phi$ is monic if and only if both $V(\phi)$ and $E(\phi)$ are one-to-one.

\subsection{A Motivating Example}\label{motivating}

To begin the discussion of injectivity, first consider the case when the class $\Phi$ of quiver maps is a singleton.  Let $G:=I(\{0,1\})$ be an empty quiver on two vertices and $H:=M(\{e\})$ an independent set of one edge.  These are drawn below.
\[
\begin{array}{c|c}
G	&	H\\
\hline
\xymatrix{
0\\
1\\
}
&
\xymatrix{
(0,e)\ar[d]^{e}\\
(1,e)\\
}
\end{array}
\]
Define $\phi_{V}:V(G)\to V(H)$ by $\phi_{V}(t):=(t,e)$.  Then, $\phi:=\left(\phi_{V},\mathbf{0}_{\{e\}}\right)$ is the unique quiver map from $G$ to $H$ extending $\phi_{V}$.

The quivers injective with respect to $\phi$ are characterized by a generalization of a full quiver.

\begin{defn}\label{loaded}
For a quiver $J$ and $v,w\in V(J)$, let
\[
\edges_{J}(v,w):=\sigma_{J}^{-1}(v)\cap\tau_{J}^{-1}(w),
\]
the set of all edges in $J$ with source $v$ and target $w$.\\
A quiver $J$ is \emph{loaded} if for every $v,w\in V(J)$, $\edges_{J}(v,w)\neq\emptyset$.
\end{defn}

\begin{ex}[Common Examples]
Consider a set $S$.
\begin{enumerate}
\item The full quiver $K(S)$ is a loaded quiver.
\item The bouquet quiver $B(S)$ is loaded if and only if $S\neq\emptyset$.
\item The independent set of vertices $I(S)$ is loaded if and only if $S=\emptyset$.
\item The independent set of edges $M(S)$ is loaded if and only if $S=\emptyset$.
\end{enumerate}
\end{ex}

\begin{ex}
The quiver below is loaded, but not a full quiver nor a bouquet.
\[\xymatrix{
\bullet\ar@(dl,ul)^{}\ar@/^/[r]^{}\ar@/^1pc/[r]^{}	&	\bullet\ar@/^/[l]^{}\ar@(u,r)^{}\ar@(r,d)^{}\\
}\]
\end{ex}

\begin{prop}[Loaded Characterization]\label{crazycomplete}
A quiver $J$ is loaded if and only if $J$ is injective with respect to the natural inclusion of an independent set of two vertices into an independent set of one edge.
\end{prop}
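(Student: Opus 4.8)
Recall from Subsection~\ref{motivating} that the map in question is $\phi=\left(\phi_{V},\mathbf{0}_{\{e\}}\right):I(\{0,1\})\to M(\{e\})$, with $\phi_{V}(t)=(t,e)$. I would prove the two implications separately, in each case unwinding the definition of $\phi$-injectivity into a purely combinatorial statement about edges between prescribed vertices, and then invoking the universal property of $M(S)$ from the Universal Characterizations proposition to produce (or test) the required lifting quiver map.

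\emph{($\Leftarrow$) Loaded implies $\phi$-injective.} Suppose $J$ is loaded, and let $\psi=\left(\psi_{V},\psi_{E}\right):I(\{0,1\})\to J$ be any quiver map; since $I(\{0,1\})$ has no edges, $\psi$ is determined by $\psi_{V}$, and the data amounts to a choice of two (not necessarily distinct) vertices $v:=\psi_{V}(0)$ and $w:=\psi_{V}(1)$ in $J$. A lift $\hat{\psi}:M(\{e\})\to J$ with $\hat{\psi}\circ\phi=\psi$ must send the vertices $(0,e),(1,e)$ to $v,w$ respectively and the edge $e$ to some edge of $J$ with source $v$ and target $w$ — that is, to an element of $\edges_{J}(v,w)$. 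Because $J$ is loaded this set is nonempty, so pick any such edge; by the universal property of $M(\{e\})$ (part (2) of the Universal Characterizations proposition, choosing the function $\{e\}\to E(J)$ picking that edge) this choice extends uniquely to a quiver map, and one checks the source/target bookkeeping makes it the desired $\hat{\psi}$. Hence $J$ is $\phi$-injective.

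\emph{($\Rightarrow$) $\phi$-injective implies loaded.} Conversely, suppose $J$ is $\phi$-injective and fix $v,w\in V(J)$; I must exhibit an edge in $\edges_{J}(v,w)$. Define $\psi:I(\{0,1\})\to J$ on vertices by $0\mapsto v$, $1\mapsto w$ (there is no edge data to specify). By $\phi$-injectivity there is $\hat{\psi}:M(\{e\})\to J$ with $\hat{\psi}\circ\phi=\psi$. Chasing the single edge $e$ of $M(\{e\})$ through $\hat{\psi}$ and using the homomorphism conditions $\hat{\psi}_{V}\circ\sigma_{M(\{e\})}=\sigma_{J}\circ\hat{\psi}_{E}$ and the target analogue, together with $\sigma_{M(\{e\})}(e)=(0,e)$, $\tau_{M(\{e\})}(e)=(1,e)$ and $\hat{\psi}_{V}(j,e)=\psi_{V}(j)$, shows that $\hat{\psi}_{E}(e)$ has source $v$ and target $w$. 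So $\edges_{J}(v,w)\neq\emptyset$, and since $v,w$ were arbitrary, $J$ is loaded.

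\emph{Expected obstacle.} There is no real obstacle here — the argument is a direct diagram chase — but the one point requiring care is bookkeeping the identification of the vertex set of $M(\{e\})$ with $\{0,1\}\times\{e\}$ and verifying that the forced assignments on vertices are exactly what $\hat{\psi}\circ\phi=\psi$ demands, so that the lift one builds from the universal property genuinely factors $\psi$ through $\phi$ rather than merely being \emph{some} map out of $M(\{e\})$. I would state that compatibility check explicitly so the reader sees both halves of the equivalence close cleanly.
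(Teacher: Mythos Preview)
Your argument is correct and follows essentially the same route as the paper: in each direction you translate the lifting problem into the existence of an edge in $\edges_{J}(v,w)$ and use the universal properties of $I(\{0,1\})$ and $M(\{e\})$ to build or test the lift. The only slip is that your arrow labels are reversed---since the statement reads ``loaded iff injective,'' the implication \emph{loaded $\Rightarrow$ injective} is the $(\Rightarrow)$ direction, not $(\Leftarrow)$.
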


\begin{proof}

($\Leftarrow$) Let $v,w\in V(J)$.  Define $\psi:\{0,1\}\to V(J)$ by $\psi(0):=v$ and $\psi(1):=w$.  Then, there is a unique quiver map $\hat{\psi}$ from $G$ to $J$ such that $V\left(\hat{\psi}\right)=\psi$.  This situation is described in the diagram below.
\[\xymatrix{
J\\
G\ar[r]_{\phi}\ar[u]^{\hat{\psi}}	&	H\\
}\]
As $J$ is injective with respect to $\phi$, there is a quiver map $\tilde{\psi}$ from $H$ to $J$ such that $\tilde{\psi}\circ\phi=\hat{\psi}$.  Let $f:=E\left(\tilde{\psi}\right)(e)$.  A calculation shows $\sigma_{J}(f)=v$ and $\tau_{J}(f)=w$.  Thus, $f\in\edges_{J}(v,w)$, meaning $G$ is loaded.

($\Rightarrow$) Consider a quiver map $\psi$ from $G$ to $J$.  Let $v:=V(\psi)(0)$, $w:=V(\psi)(1)$, and $f\in\edges_{J}(v,w)$.  Define $\hat{\psi}:\{e\}\to E(J)$ by $\hat{\psi}(e):=f$.  Then, there is a unique quiver map $\tilde{\psi}$ from $H$ to $J$ such that $E\left(\tilde{\psi}\right)=\hat{\psi}$.  A calculation shows that $V\left(\tilde{\psi}\circ\phi\right)=V(\psi)$, meaning $\tilde{\psi}\circ\phi=\psi$ by the universal property of $G$.

\end{proof}

\subsection{Mono-Injectivity}

Let $\Phi$ be the class of all monomorphisms in $\Quiv$.  From here forward, the term ``mono-injective'' will be used for being injective relative to the class of all monomorphisms.  This section will characterize the mono-injective quivers.

Since the map $\phi$ from Section \ref{motivating} is monic in $\Quiv$, every mono-injective $J$ must be loaded.  However, the inclusion of the other monic maps does not shrink the class of objects much further.

\begin{prop}[Mono-Injective Characterization]\label{mono-inj}
A quiver $J$ is mono-injective in $\Quiv$ if and only if $J$ is loaded and has at least one vertex.
\end{prop}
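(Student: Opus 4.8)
The plan is to prove both directions, using the motivating example of Section~\ref{motivating} together with Proposition~\ref{crazycomplete} for necessity, and a direct extension for sufficiency. For \textbf{necessity}, suppose $J$ is mono-injective. The quiver map $\phi$ of Section~\ref{motivating} is monic, so $J$ is injective with respect to it; since $\phi$ is precisely the inclusion of an independent set of two vertices into an independent set of one edge, Proposition~\ref{crazycomplete} shows $J$ is loaded. For the vertex condition, observe that the unique quiver map $I(\emptyset)\to I(\mathbb{1})$ is monic (its vertex and edge components are one-to-one, vacuously), and there is a unique quiver map $I(\emptyset)\to J$; mono-injectivity then produces a quiver map $I(\mathbb{1})\to J$ whose vertex component is a function $\mathbb{1}\to V(J)$, forcing $V(J)\neq\emptyset$.

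For \textbf{sufficiency}, suppose $J$ is loaded with a chosen vertex $v_{0}\in V(J)$, and let $\phi\in\Quiv(A,B)$ be monic with $\psi\in\Quiv(A,J)$ arbitrary; recall that $V(\phi)$ and $E(\phi)$ are then one-to-one. I would define $\hat{\psi}_{V}:V(B)\to V(J)$ by $\hat{\psi}_{V}\left(V(\phi)(a)\right):=V(\psi)(a)$ for $a\in V(A)$ (well defined since $V(\phi)$ is injective) and $\hat{\psi}_{V}(v):=v_{0}$ for $v$ outside the image of $V(\phi)$. Then I would define $\hat{\psi}_{E}:E(B)\to E(J)$ by $\hat{\psi}_{E}\left(E(\phi)(a)\right):=E(\psi)(a)$ for $a\in E(A)$, and for each edge $e$ outside the image of $E(\phi)$ by choosing some $\hat{\psi}_{E}(e)\in\edges_{J}\left(\hat{\psi}_{V}(\sigma_{B}(e)),\hat{\psi}_{V}(\tau_{B}(e))\right)$, a set that is nonempty precisely because $J$ is loaded. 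A routine check, split into the case of edges in the image of $E(\phi)$ (using that $\phi$ and $\psi$ are quiver maps) and the case of the remaining edges (using the choice above), shows that $\hat{\psi}:=\left(\hat{\psi}_{V},\hat{\psi}_{E}\right)$ is a quiver homomorphism and that $\hat{\psi}\circ\phi=\psi$.

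The only real computation is the verification that $\hat{\psi}$ intertwines the source and target maps, which is the expected two-case argument and not a genuine obstacle. The point worth flagging is why the hypothesis ``at least one vertex'' cannot be dropped: loadedness supplies edges between any pair of vertices of $J$, but one still needs a vertex $v_{0}$ in order to define $\hat{\psi}_{V}$ on the vertices of $B$ missed by $\phi$, and indeed $I(\emptyset)$ is vacuously loaded yet is not mono-injective by the necessity argument above. Conversely, loadedness is exactly what the edge extension requires, since the edges of $B$ lying outside the image of $E(\phi)$ may be assigned independently of one another, each one constrained only by the $\hat{\psi}_{V}$-images of its own source and target.
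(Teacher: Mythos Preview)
Your proof is correct and follows essentially the same approach as the paper: necessity via Proposition~\ref{crazycomplete} together with the monic inclusion $I(\emptyset)\hookrightarrow I(\mathbb{1})$, and sufficiency by extending the vertex map with a default vertex and then using loadedness to choose images for the remaining edges. Your sufficiency argument is in fact a mild streamlining of the paper's, which breaks the edges of $B$ outside $\Ran(E(\phi))$ into four cases according to which endpoints lie in $\Ran(V(\phi))$; since you define $\hat{\psi}_{V}$ first, all four cases collapse into the single choice $\hat{\psi}_{E}(e)\in\edges_{J}\left(\hat{\psi}_{V}(\sigma_{B}(e)),\hat{\psi}_{V}(\tau_{B}(e))\right)$.
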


\begin{proof}

($\Rightarrow$) By Proposition \ref{crazycomplete}, $J$ must be loaded.  Further, let $\mathbf{0}_{V(J)}:\emptyset\to V(J)$ and $\mathbf{0}_{\{0\}}:\emptyset\to\{0\}$ be the empty functions to $V(J)$ and $\{0\}$, respectively.  Then, there is a unique quiver map $\hat{\mathbf{0}}_{V(J)}$ from $I(\emptyset)$ to $J$.  Consider the following diagram in $\Quiv$.
\[\xymatrix{
J\\
I(\emptyset)\ar[r]_{I\left(\mathbf{0}_{\{0\}}\right)}\ar[u]^{\hat{\mathbf{0}}_{V(J)}}	&	I(\{0\})\\
}\]
Since $I\left(\mathbf{0}_{\{0\}}\right)$ is monic, there is a quiver map $\psi$ from $I(\{0\})$ to $J$ such that $\psi\circ I\left(\mathbf{0}_{\{0\}}\right)=\hat{\mathbf{0}}_{V(J)}$.  Therefore, $V(\psi):\{0\}\to V(J)$, forcing $V(J)$ to be nonempty.

($\Leftarrow$) Consider the following diagram in $\Quiv$,
\[\xymatrix{
J\\
D\ar[r]_{\varphi}\ar[u]^{\psi}	&	C\\
}\]
where $\varphi$ is monic.  Then, $V(\varphi)$ and $E(\varphi)$ are one-to-one.  This will be used throughout in constructing the extension of $\psi$.

Define the following partition of the vertices and edges of $C$, where $\Ran$ denotes the range of a particular function.
\[\begin{array}{rcl}
V_{0}	&	:=	&	\Ran(V(\varphi)),\\
V_{1}	&	:=	&	V(C)\setminus V_{0},\\
E_{0}	&	:=	&	\Ran(E(\varphi)),\\
E_{1}	&	:=	&	\left\{e\in E(C):\sigma_{C}(e),\tau_{C}(e)\in V_{1}\right\},\\
E_{2}	&	:=	&	\left\{e\in E(C)\setminus E_{0}:\sigma_{C}(e),\tau_{C}(e)\in V_{0}\right\},\\
E_{3}	&	:=	&	\left\{e\in E(C):\sigma_{C}(e)\in V_{0},\tau_{C}(e)\in V_{1}\right\},\\
E_{4}	&	:=	&	\left\{e\in E(C):\sigma_{C}(e)\in V_{1},\tau_{C}(e)\in V_{0}\right\}.\\
\end{array}\]
For $V_{0}$ and $E_{0}$, $\psi$ determines their images in $J$.

Choose some $w\in V(J)$ and $f\in\edges_{J}(w,w)$ for the images of $V_{1}$ and $E_{1}$.

For each $e\in E_{2}$, then there are unique $s_{e},t_{e}\in V(D)$ such that $\sigma_{C}(e)=V(\varphi)\left(s_{e}\right)$ and $\tau_{C}(e)=V(\varphi)\left(t_{e}\right)$.  Choose $g_{e}\in\edges_{J}\left(V(\psi)\left(s_{e}\right),V(\psi)\left(t_{e}\right)\right)$ as its image.

For each $e\in E_{3}$, then there is a unique $s_{e}\in V(D)$ such that $\sigma_{C}(e)=V(\varphi)\left(s_{e}\right)$.  Choose $h_{e}\in\edges_{J}\left(V(\psi)\left(s_{e}\right),w\right)$ as its image.

For each $e\in E_{4}$, then there is a unique $t_{e}\in V(D)$ such that $\tau_{C}(e)=V(\varphi)\left(t_{e}\right)$.  Choose $i_{e}\in\edges_{J}\left(w,V(\psi)\left(t_{e}\right)\right)$ as its image.

Define $\hat{\psi}_{V}:V(C)\to V(J)$ by
\[
\hat{\psi}_{V}(v):=\left\{\begin{array}{cc}
V(\psi)(x),	&	v=V(\phi)(x), x\in V(D),\\
w,	&	v\in V_{1},\\
\end{array}\right.
\]
and $\hat{\psi}_{E}:E(C)\to E(J)$ by
\[
\hat{\psi}_{E}(e):=\left\{\begin{array}{cc}
V(\psi)(y),	&	e=E(\phi)(y), y\in E(D),\\
f,	&	e\in E_{1},\\
g_{e},	&	e\in E_{2},\\
h_{e},	&	e\in E_{3},\\
i_{e},	&	e\in E_{4}.\\
\end{array}\right.
\]
A routine check shows that $\hat{\psi}:=\left(\hat{\psi}_{V},\hat{\psi}_{E}\right)$ is a quiver map from $C$ to $J$, and $\hat{\psi}\circ\varphi=\psi$ by design.

\end{proof}

\subsection{Mono-Essential Maps and the Mono-Injective Envelope}

With mono-injective objects characterized, half of the injective envelope question is solved.  Next, mono-essential maps are characterized.  The empty quiver $I(\emptyset)$ is a singular case since every map from it is monic.  Thus, it will be considered separately.

\begin{prop}[Mono-Essential Map Characterization, $I(\emptyset)$ Case]\label{mono-ess-triv}
A quiver map $\xymatrix{I(\emptyset)\ar[r]^{\varphi} & C}$ is mono-essential if and only if $\card(V(C))\leq 1$ and $\card(E(C))\leq 1$.
\end{prop}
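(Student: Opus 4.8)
The plan is to analyze what it means for $\varphi\colon I(\emptyset)\to C$ to be mono-essential by testing it against the canonical maps $\alpha\colon C\to D$ and using the fact that every map out of $I(\emptyset)$ is automatically monic. Recall that $\varphi$ is mono-essential when, for every quiver $D$ and every $\alpha\in\Quiv(C,D)$, the composite $\alpha\circ\varphi$ being monic forces $\alpha$ to be monic. Since $\alpha\circ\varphi$ is a map out of $I(\emptyset)$, it is \emph{always} monic; hence $\varphi$ is mono-essential precisely when \emph{every} quiver map $\alpha$ out of $C$ is monic, i.e.\ $V(\alpha)$ and $E(\alpha)$ are one-to-one for all $\alpha$. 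So the statement reduces to: every quiver map out of $C$ is monic if and only if $\card(V(C))\le 1$ and $\card(E(C))\le 1$.

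For the ($\Leftarrow$) direction I would argue that if $C$ has at most one vertex and at most one edge, then any $\alpha\colon C\to D$ has $V(\alpha)$ and $E(\alpha)$ defined on sets of cardinality $\le 1$, and a function whose domain has at most one element is automatically injective; thus $\alpha$ is monic. For the ($\Rightarrow$) direction I would prove the contrapositive by producing, from too large a vertex or edge set, a non-monic map out of $C$. If $\card(V(C))\ge 2$, pick distinct $v,w\in V(C)$; I want a quiver map $C\to D$ collapsing $v$ and $w$. The cleanest choice is the full quiver $D=K(\{*\})$ on a one-point set: by the universal property (item (3) of the Proposition on Universal Characterizations), the unique function $V(C)\to\{*\}$ lifts to a quiver map $\hat\chi\colon C\to K(\{*\})$, and $V(\hat\chi)$ is constant, hence not injective since $v\neq w$. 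So $\hat\chi$ is a non-monic map out of $C$, contradicting mono-essentiality. Similarly, if $\card(E(C))\ge 2$, pick distinct edges $e,e'\in E(C)$ and use item (4), the bouquet $B(\{*\})$: the unique function $E(C)\to\{*\}$ lifts to $\hat\xi\colon C\to B(\{*\})$ with $E(\hat\xi)$ constant, hence not injective, again contradicting mono-essentiality.

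The main subtlety — really the only place one must be slightly careful — is the case $\card(V(C))\ge 2$ but $E(C)=\emptyset$: then $K(\{*\})$ still works since the edge map $\emptyset\to\{*\}$ is vacuously injective and the vertex map is the obstruction; and dually, if $\card(E(C))\ge 2$, note $C$ automatically has at least one vertex (each edge has a source), so $B(\{*\})$ is a legitimate target and no degenerate empty-vertex issue arises. Assembling these two implications gives both directions of the equivalence.
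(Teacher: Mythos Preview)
Your proof is correct and follows essentially the same approach as the paper: both reduce the question to ``every map out of $C$ is monic'' (since any composite with $\varphi$ is automatically monic), handle the $(\Leftarrow)$ direction by the trivial observation that functions with domain of size $\leq 1$ are injective, and handle the $(\Rightarrow)$ direction by mapping $C$ to a one-vertex, one-loop quiver. The only cosmetic difference is that the paper uses a single test map $C\to B(\mathbb{1})$ (via the universal property of $B$) to simultaneously force both $\mathbf{1}_{V(C)}$ and $\mathbf{1}_{E(C)}$ to be injective, whereas you split into two cases using $K(\{*\})$ and $B(\{*\})$; since $K(\{*\})\cong B(\{*\})\cong B(\mathbb{1})$, the targets are in fact the same quiver, so this is a distinction without a difference.
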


\begin{proof}

Given any quiver $C$, then $\hat{\mathbf{0}}_{V(C)}=\left(\mathbf{0}_{V(C)},\mathbf{0}_{E(C)}\right)$ is the unique quiver map from $I(\emptyset)$ to $C$.  Likewise, $\hat{\mathbf{1}}_{E(C)}=\left(\mathbf{1}_{V(C)},\mathbf{1}_{E(C)}\right)$ is the unique quiver map from $C$ to $B(\mathbb{1})$.  Observe that $\hat{\mathbf{0}}_{V(C)}$ is always monic, as is $\left(\mathbf{0}_{\mathbb{1}},\mathbf{0}_{\mathbb{1}}\right)=\hat{\mathbf{1}}_{E(C)}\circ\hat{\mathbf{0}}_{V(C)}$.

($\Rightarrow$) By the above fact, $\varphi=\hat{\mathbf{0}}_{V(C)}$.  Since $\varphi$ is mono-essential, $\hat{\mathbf{1}}_{E(C)}$ must be monic.  Then, $\mathbf{1}_{V(C)}$ and $\mathbf{1}_{E(C)}$ are one-to-one, giving $\card(V(C))\leq 1$ and $\card(E(C))\leq 1$.

($\Leftarrow$) Given that $\card(V(C))\leq 1$ and $\card(E(C))\leq 1$, any functions from $V(C)$ or $E(C)$ are immediately one-to-one.  Hence, every quiver map from $C$ is monic.

\end{proof}

Assuming that the vertex set is nonempty, a mono-essential map adds no vertices, and can only add an edge from $v$ to $w$ if there was not one already.

\begin{prop}[Mono-Essential Map Characterization, Nontrivial Case]\label{mono-ess}
A monic quiver map $\xymatrix{D\ar[r]^{\varphi} & C}$, where $V(D)\neq\emptyset$, is mono-essential if and only if the following conditions hold:
\begin{enumerate}
\item $V(\varphi)$ is bijective;
\item\label{mono-ess-2} if $\edges_{D}(v,w)\neq\emptyset$ for some $v,w\in V(D)$, then
\[
E(\varphi)\left(\edges_{D}(v,w)\right)=\edges_{C}\left(V(\varphi)(v),V(\varphi)(w)\right);
\]
\item\label{mono-ess-3} if $\edges_{D}(v,w)=\emptyset$ for some $v,w\in V(D)$, then
\[
\card\left(\edges_{C}\left(V(\varphi)(v),V(\varphi)(w)\right)\right)\leq 1.
\]
\end{enumerate}
\end{prop}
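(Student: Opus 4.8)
The plan is to prove both directions by exploiting the adjunctions $V \dashv K$ and $E \dashv B$, together with the mono-injective characterization from Proposition~\ref{mono-inj}, to reduce each claim to a statement about the maps $\mathbf{1}_{V(C)}$-type or $K$-valued comparison maps being monic.

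For the ($\Rightarrow$) direction, I would test mono-essentiality against three carefully chosen maps $\alpha$ out of $C$. First, to get (1), compose $\varphi$ with the canonical map $C \to K(V(D))$ induced (via the adjunction $V \dashv K$) by a left inverse of $V(\varphi)$ (which exists since $V(\varphi)$ is injective and $V(D) \neq \emptyset$, so we may collapse the complement of the range to a single chosen vertex). The composite $C \to K(V(D))$ precomposed with $\varphi$ is then the canonical map $D \to K(V(D))$ coming from $\mathrm{id}_{V(D)}$, which is monic on vertices; a small edge-set argument shows this composite is monic, so mono-essentiality forces $C \to K(V(D))$ to be monic, hence $V(\varphi)$ is surjective, giving bijectivity. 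For (2) and (3), having established that $V(\varphi)$ is a bijection (so we may identify $V(D) = V(C)$ and take $V(\varphi) = \mathrm{id}$), I would build, for a fixed pair $v,w$, a map $\alpha : C \to C'$ where $C'$ is a quotient of $C$ that identifies, on edges, the "excess" — either collapsing $\edges_C(v,w)$ down onto $E(\varphi)(\edges_D(v,w))$ when the latter is nonempty, or collapsing $\edges_C(v,w)$ to a single edge when $\edges_D(v,w) = \emptyset$ — while leaving everything else alone. By construction $\alpha \circ \varphi$ is monic (it is injective on vertices and on edges, since it is injective on the range of $\varphi$ and the collapsed edges of $C$ either don't meet that range or get sent somewhere consistent), so mono-essentiality forces $\alpha$ monic, which is exactly the conclusion that no such "excess" existed.

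For the ($\Leftarrow$) direction, assume (1)--(3). Let $\alpha : C \to C''$ with $\alpha \circ \varphi$ monic; I must show $\alpha$ is monic, i.e.\ $V(\alpha)$ and $E(\alpha)$ are injective. Injectivity of $V(\alpha)$: since $V(\varphi)$ is bijective and $V(\alpha \circ \varphi) = V(\alpha) \circ V(\varphi)$ is injective, $V(\alpha)$ is injective on $V(C) = \Ran(V(\varphi))$. Injectivity of $E(\alpha)$: take distinct $e_1, e_2 \in E(C)$ with $\alpha(e_1) = \alpha(e_2)$; applying $V(\alpha)$ to sources and targets and using that $V(\alpha)$ is injective together with the quiver-map equations, $e_1$ and $e_2$ have sources mapping to a common vertex and targets mapping to a common vertex, and since $V(\varphi)$ is a bijection there are unique $v, w \in V(D)$ with $\sigma_C(e_i) = V(\varphi)(v)$, $\tau_C(e_i) = V(\varphi)(w)$; thus $e_1, e_2 \in \edges_C(V(\varphi)(v), V(\varphi)(w))$. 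If $\edges_D(v,w) \neq \emptyset$, by (2) this set equals $E(\varphi)(\edges_D(v,w))$, so $e_i = E(\varphi)(d_i)$ with $d_i \in \edges_D(v,w)$; then $E(\alpha \circ \varphi)(d_1) = E(\alpha \circ \varphi)(d_2)$ and injectivity of $E(\alpha \circ \varphi)$ gives $d_1 = d_2$, hence $e_1 = e_2$, a contradiction. If $\edges_D(v,w) = \emptyset$, then by (3) the set $\edges_C(V(\varphi)(v), V(\varphi)(w))$ has at most one element, contradicting $e_1 \neq e_2$ directly. Either way $E(\alpha)$ is injective, so $\alpha$ is monic.

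The main obstacle I anticipate is the ($\Rightarrow$) direction: constructing the quotient quivers $C'$ and $C''$ that witness failure of conditions (2) and (3) and verifying that the precomposite with $\varphi$ stays monic requires some care, because collapsing edges of $C$ could in principle identify two edges that lie in the range of $E(\varphi)$. The key point that makes it work is that the edges being collapsed in case (3) all lie \emph{outside} $\Ran(E(\varphi))$ when $\edges_D(v,w) = \emptyset$ (by $V(\varphi)$-bijectivity, any edge of $D$ between the preimages of $v,w$ would land in $\edges_C$ of those vertices), and in case (2) the collapse is onto a single representative already in the range, so the restriction to $\Ran(E(\varphi))$ remains injective. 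Once that bookkeeping is set up, the rest is a routine application of the definition of mono-essential.
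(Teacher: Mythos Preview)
Your $(\Leftarrow)$ direction is correct and matches the paper's argument essentially line for line. Your $(\Rightarrow)$ arguments for conditions (2) and (3) via quotient maps are also the paper's approach: the paper builds a quiver equivalence relation on $C$ identifying two parallel edges (one in the range of $E(\varphi)$ and one outside for (2), or two outside for (3)) and observes the resulting quotient map is not monic while its precomposite with $\varphi$ is.

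There is, however, a genuine gap in your argument for condition (1). You propose to use the canonical map $\hat{r}:C\to K(V(D))$ induced by a retraction $r$ of $V(\varphi)$, and claim that $\hat{r}\circ\varphi:D\to K(V(D))$ is monic. But the edge map of this composite sends $e\in E(D)$ to the pair $(\sigma_D(e),\tau_D(e))\in V(D)^2$, which is injective only when $D$ has no parallel edges. For example, if $D$ has a single vertex with two loops, the map $D\to K(V(D))$ collapses both loops to the unique loop of $K(\{v\})$ and is not monic; so mono-essentiality of $\varphi$ gives you no information about $\hat{r}$, and the argument stalls. The adjunction $V\dashv K$ is simply too coarse here: $K(V(D))$ does not see edge multiplicities in $D$.

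The paper's fix for (1) is more elementary and avoids this problem entirely: if $V(\varphi)$ is not surjective, pick $v\in V(C)\setminus\Ran(V(\varphi))$ and some $w\in\Ran(V(\varphi))$, and form the quotient of $C$ identifying only $v\sim w$ on vertices and nothing on edges. The quotient map $q$ is not monic (it identifies $v$ and $w$), but $q\circ\varphi$ is monic since $V(\varphi)$ misses $v$ and $E(q)$ is the identity. This is exactly the same quotient-construction template you already use for (2) and (3); you should use it for (1) as well rather than routing through $K$.
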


\begin{proof}

($\Leftarrow$) Let $\xymatrix{C\ar[r]^{\alpha} & A}\in\Quiv$ satisfy that $\alpha\circ\varphi$ is monic.  Then, $V(\alpha)\circ V(\varphi)$ and $E(\alpha)\circ E(\varphi)$ are one-to-one.  Since $V(\varphi)$ is bijective, $V(\alpha)$ is one-to-one.

Consider $e,f\in E(C)$ such that $E(\alpha)(e)=E(\alpha)(f)$.  Let $v:=\sigma_{C}(e)$ and $w:=\tau_{C}(e)$.  A calculation shows
\[\begin{array}{ccc}
V(\alpha)(v)=\left(V(\alpha)\circ\sigma_{C}\right)(f)	&	\textrm{and}	&	V(\alpha)(w)=\left(V(\alpha)\circ\tau_{C}\right)(f).\\
\end{array}\]
Since $V(\alpha)$ is one-to-one, $\sigma_{C}(f)=v$ and $\tau_{C}(f)=w$, giving $e,f\in\edges_{C}(v,w)$.  If $\edges_{D}\left(V(\varphi)^{-1}(v),V(\varphi)^{-1}(w)\right)=\emptyset$, then $\card\left(\edges_{C}(v,w)\right)=1$ by Criterion \ref{mono-ess-3}, forcing $e=f$.  Otherwise, by Criterion \ref{mono-ess-2}, there are $e_{0},f_{0}\in\edges_{D}\left(V(\varphi)^{-1}(v),V(\varphi)^{-1}(w)\right)$ such that $E(\varphi)\left(e_{0}\right)=e$ and $E(\varphi)\left(f_{0}\right)=f$.  Then,
\[
E(\alpha\circ\varphi)\left(e_{0}\right)=E(\alpha)(e)
=E(\alpha)(f)
=E(\alpha\circ\varphi)\left(f_{0}\right).
\]
Since $E(\alpha\circ\varphi)$ is one-to-one, $e_{0}=f_{0}$, yielding $e=f$.  Hence, $E(\alpha)$ is one-to-one, and $\alpha$ is monic.

($\Leftarrow$) In each case, if the condition fails, an appropriate quiver equivalence relation $\sim$ on $C$ is defined, so that the quotient map $q:C\to C/\sim$ is not monic, but $q\circ\varphi$ is.

\begin{itemize}
\item[\textbf{1 Fails:}]  Assume that there is $v\in V(C)\setminus\Ran(V(\varphi))$.  Choose $w\in\Ran(V(\varphi))$ and let $\sim_{V}$ be the equivalence relation on $V(C)$ that is merely equality on all vertices except for associating $v$ and $w$.  Letting $\sim_{E}$ be the equality relation on $E(C)$, then $\sim:=\left(\sim_{V},\sim_{E}\right)$ is easily seen to be a quiver equivalence relation on $C$.

\item[\textbf{2 Fails:}]  Assume that there are $v,w\in V(D)$, $e\in E(\varphi)(\edges_{D}(v,w))$, and $f\in E(C)$ such that $f\in\edges_{C}(V(\varphi)(v),V(\varphi)(w))\setminus\Ran(E(\varphi))$.  Let $\sim_{E}$ be the equivalence relation on $E(C)$ that is merely equality on all edges except for associating $e$ and $f$.  Letting $\sim_{V}$ be the equality relation on $V(C)$, then $\sim:=\left(\sim_{V},\sim_{E}\right)$ is easily seen to be a quiver equivalence relation on $C$.

\item[\textbf{3 Fails:}]  Assume that there are
\[\begin{array}{ccc}
v,w\in V(D)	&	\textrm{and}	&	e,f\in\edges_{C}(V(\varphi)(v),V(\varphi)(w))\\
\end{array}\]
such that $e\neq f$ and $\edges_{D}(v,w)=\emptyset$.  Let $\sim_{E}$ be the equivalence relation on $E(C)$ that is merely equality on all edges except for associating $e$ and $f$.  Letting $\sim_{V}$ be the equality relation on $V(C)$, then $\sim:=\left(\sim_{V},\sim_{E}\right)$ is easily seen to be a quiver equivalence relation on $C$.
\end{itemize}

\end{proof}

Therefore, the mono-injective envelope of a quiver $D$ must be a loaded quiver with a mono-essential quiver map from $D$.  This is accomplished by adding edges to $D$ where none already exist, making it loaded.  This process is described below as the ``loading'' of a quiver.

\begin{defn}
Given a quiver $D$, let $V_{L}:=V(D)$ and
\[
E_{L}:=\{(0,e):e\in E(D)\}\cup\{(1,v,w):v,w\in V(D),\edges_{D}(v,w)=\emptyset\}.
\]
Define $\sigma_{L},\tau_{L}:E_{L}\to V_{L}$ by
\[
\sigma_{L}(f):=\left\{\begin{array}{cc}
\sigma_{D}(e),	&	f=(0,e),\\
v,	&	f=(1,v,w),\\
\end{array}\right.
\]
and
\[
\tau_{L}(f):=\left\{\begin{array}{cc}
\tau_{D}(e),	&	f=(0,e),\\
w,	&	f=(1,v,w).\\
\end{array}\right.
\]
Then, $L(D):=\left(V_{L},E_{L},\sigma_{L},\tau_{L}\right)$ is a quiver, the \emph{loading} of $D$.
\end{defn}

\begin{ex}
Let $D$ be the quiver drawn below.
\[\xymatrix{
0\ar@/^/[r]^{e}\ar@/_/[r]_{f}	&	1\\
}\]
Then, $L(D)$ is the quiver drawn below.
\[\xymatrix{
0\ar@(dl,ul)^{(1,0,0)}\ar@/^2pc/[rr]^{(0,e)}\ar@/^/[rr]^{(0,f)}	&	&	1\ar@(ur,dr)^{(1,1,1)}\ar@/^/[ll]^{(1,1,0)}\\
}\]
\end{ex}

For every quiver except $I(\emptyset)$, the loading characterizes the mono-injective envelope when equipped with a canonical inclusion.

\begin{thm}
Given a quiver $D$ with $V(D)\neq\emptyset$, let $L(D)$ be the loading of $D$.  Define $j_{D,V}:V(D)\to V(L(D))$ by $j_{D,V}(v):=v$ and $j_{D,E}:E(D)\to E(L(D))$ by $j_{D,E}(e):=(0,e)$.  Then, $j_{D}:=\left(j_{D,V},j_{D,E}\right)$ is a mono-essential quiver map.  Therefore, $L(D)$ equipped with $j_{D}$ is a mono-injective envelope of $D$ in $\Quiv$.
\end{thm}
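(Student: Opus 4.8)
The plan is to reduce everything to Propositions \ref{mono-inj} and \ref{mono-ess}: show that $j_{D}$ is a monomorphism, that $L(D)$ is loaded (hence mono-injective, since $V(D)\neq\emptyset$), and that $j_{D}$ satisfies the three criteria characterizing mono-essential maps; the envelope claim is then immediate from the definitions. First I would check that $j_{D}$ is a quiver map: since $j_{D,V}$ is the identity on $V(D)=V(L(D))$ and $\sigma_{L}((0,e))=\sigma_{D}(e)$, $\tau_{L}((0,e))=\tau_{D}(e)$ by construction of $L(D)$, the compatibility equations $j_{D,V}\circ\sigma_{D}=\sigma_{L}\circ j_{D,E}$ and $j_{D,V}\circ\tau_{D}=\tau_{L}\circ j_{D,E}$ hold. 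It is monic because $j_{D,V}$ is the identity and $j_{D,E}\colon e\mapsto(0,e)$ is visibly one-to-one. Next, $L(D)$ is loaded: fixing $v,w\in V(L(D))=V(D)$, if $\edges_{D}(v,w)\neq\emptyset$ then $(0,e)\in E_{L}$ has source $v$ and target $w$ for any $e$ in that set, while if $\edges_{D}(v,w)=\emptyset$ then $(1,v,w)\in E_{L}$ has source $v$ and target $w$; either way $\edges_{L(D)}(v,w)\neq\emptyset$. Since $V(D)\neq\emptyset$, Proposition \ref{mono-inj} gives that $L(D)$ is mono-injective.

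To see that $j_{D}$ is mono-essential I would verify the three criteria of Proposition \ref{mono-ess}. Criterion (1) holds since $V(j_{D})$ is the identity, hence bijective. Criteria (2) and (3) both rest on the same computation of $\edges_{L(D)}(v,w)$ for $v,w\in V(D)$: an edge $(0,e)$ lies in $\edges_{L(D)}(v,w)$ exactly when $e\in\edges_{D}(v,w)$, and an edge $(1,v',w')$ lies in $\edges_{L(D)}(v,w)$ exactly when $v'=v$ and $w'=w$, which by the definition of $E_{L}$ can only occur when $\edges_{D}(v,w)=\emptyset$. Thus if $\edges_{D}(v,w)\neq\emptyset$ then $\edges_{L(D)}(v,w)=\{(0,e):e\in\edges_{D}(v,w)\}=E(j_{D})\left(\edges_{D}(v,w)\right)$, which is Criterion (2); and if $\edges_{D}(v,w)=\emptyset$ then $\edges_{L(D)}(v,w)=\{(1,v,w)\}$, a singleton, giving Criterion (3). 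Hence $j_{D}$ is mono-essential by Proposition \ref{mono-ess}.

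Finally, a mono-injective envelope of $D$ is, by definition, a mono-essential quiver map from $D$ into a mono-injective quiver; since $j_{D}\colon D\to L(D)$ is mono-essential and $L(D)$ is mono-injective, $L(D)$ equipped with $j_{D}$ is a mono-injective envelope of $D$. I do not anticipate a genuine obstacle, as the substance is already carried by the two characterization propositions; the only point requiring care is the bookkeeping for $\edges_{L(D)}(v,w)$ — namely that the tagged edges $(0,-)$ and $(1,-,-)$ never collide and that a $(1,v,w)$ edge is present precisely when $D$ has no edge from $v$ to $w$ — and this is exactly what makes Criteria (2) and (3) hold on the nose.
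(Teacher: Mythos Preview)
Your proof is correct and follows exactly the approach of the paper, which simply invokes Proposition~\ref{mono-inj} to conclude that $L(D)$ is mono-injective and Proposition~\ref{mono-ess} to conclude that $j_{D}$ is mono-essential. You have merely expanded the details the paper leaves implicit---verifying that $j_{D}$ is a monic quiver map, that $L(D)$ is loaded, and that the three criteria of Proposition~\ref{mono-ess} hold via the bookkeeping on $\edges_{L(D)}(v,w)$---all of which is routine and accurate.
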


\begin{proof}

By Proposition \ref{mono-inj}, $L(D)$ is mono-injective, and $j_{D}$ satisfies the conditions of Proposition \ref{mono-ess}.

\end{proof}

This theorem guarantees a mono-injective envelope for every quiver except for $I(\emptyset)$, but even $I(\emptyset)$ has a mono-injective envelope.

\begin{ex}[Mono-Injective Envelope of $I(\emptyset)$]
Consider the bouquet of one loop, $B(\mathbb{1})$.  The quiver map $\left(\mathbf{0}_{\mathbb{1}},\mathbf{0}_{\mathbb{1}}\right)$ from $I(\emptyset)$ to $B(\mathbb{1})$ is mono-essential by Proposition \ref{mono-ess-triv}.  Also, $B(\mathbb{1})$ is loaded, so this bouquet equipped with this embedding is a mono-injective envelope of $I(\emptyset)$.
\end{ex}

Thus, every quiver has a mono-injective envelope in $\Quiv$.  This fact also codifies abstractly the statement that ``every graph is a subgraph of a `complete' graph''.  Further, since a mono-injective envelope is unique up to isomorphism, any representation of it will do.  The following are a few examples of mono-injective envelopes for some common quivers.

\begin{ex}[Empty Quivers]
For any nonempty set $S$, the quiver map $\left(id_{S},\mathbf{0}_{S^{2}}\right)$ from $I(S)$ to $K(S)$ is mono-essential by Proposition \ref{mono-ess}, and $K(S)$ is loaded.  Thus, $K(S)$ with this map is a mono-injective envelope of $I(S)$.
\end{ex}

\begin{ex}[Loaded Quivers]
For any loaded quiver $D$ with $V(D)\neq\emptyset$, the identity map $id_{D}$ from $D$ to itself is mono-essential by Proposition \ref{mono-ess}.  Thus, $D$ with its identity map is a mono-injective envelope of $D$.  This includes full quivers and bouquets.
\end{ex}

\section{Projectivity}

From \cite[\S II.9.27]{joyofcats}, the dual notion of $\Phi$-injectivity is \emph{$\Phi$-projectivity}, diagrammatically described below.
\[\xymatrix{
P\ar[d]_{\psi}\ar@{..>}[dr]^{\hat{\psi}}\\
B	&	A\ar[l]^{\phi}\\
}\]
Similarly, a \emph{$\Phi$-coessential} map is the minimality condition dual to that of a $\Phi$-essential map.

This section considers projectivity of quivers with respect to the class of all epimorphisms.  From \cite[Fact 2.15]{graphtransformation}, a quiver homomorphism $\phi$ is epic if and only if both $V(\phi)$ and $E(\phi)$ are onto.  Likewise, $\phi$ is an isomorphism if and only if both $V(\phi)$ and $E(\phi)$ are bijective.

A key ingredient in this section will be the following construction.

\begin{defn2}\label{explosion}
Given a quiver $G$, a vertex $v\in V(G)$ is \emph{independent} if $\sigma_{G}^{-1}(v)=\tau_{G}^{-1}(v)=\emptyset$.  Define
\[
\indep(G):=\{v\in V(G):v\textrm{ is independent}\},
\]
the set of all independent vertices in $G$.
The \emph{explosion} of $G$ is the quiver
\[
X(G):=I(\indep(G))\coprod M(E(G)),
\]
the disjoint union of the independent set of $G$ with the edges of $G$ forced to be independent.
\end{defn2}

\begin{ex2}
Let $G$ be the quiver drawn below.
\[\xymatrix{
v\ar@(dl,ul)^{e}	&	w\ar@/^2pc/[d]^{f}\\
u	&	x\ar@/^/[u]^{g}\ar@/_/[u]_{h}\\
}\]
Then, $\indep(G)=\{u\}$.  Forcing the edge set to be independent yields $M(E(G))$ below, using the representation in Definition \ref{reflections}.
\[\xymatrix{
(0,e)\ar[d]^{e}	&	(0,f)\ar[d]^{f}	&	(0,g)\ar[d]^{g}	&	(0,h)\ar[d]^{h}\\
(1,e)	&	(1,f)	&	(1,g)	&	(1,h)\\
}\]
To draw $X(G)$, $I(\indep(G))$ will be denoted by elements of the form $(0,x)$, while those for $M(E(G))$ will have the form $(1,x)$.
\[\xymatrix{
(0,u)	&	(1,(0,e))\ar[d]^{(1,e)}	&	(1,(0,f))\ar[d]^{(1,f)}	&	(1,(0,g))\ar[d]^{(1,g)}	&	(1,(0,h))\ar[d]^{(1,h)}\\
&	(1,(1,e))	&	(1,(1,f))	&	(1,(1,g))	&	(1,(1,h))\\
}\]
\end{ex2}

There is a natural map from $X(G)$ onto the original quiver $G$.

\begin{defn2}
Given a quiver $G$, let $\kappa_{G}:\indep(G)\to V(G)$ by $\kappa_{G}(v):=v$, the usual inclusion of the independent vertices.  Likewise, let $\lambda_{G}:E(G)\to E(G)$ be the identity function on $E(G)$.  Then, there are unique quiver maps $\hat{\kappa}_{G}:I(\indep(G))\to G$ and $\hat{\lambda}_{G}:M(E(G))\to G$ such that $V\left(\hat{\kappa}_{G}\right)=\kappa_{G}$ and $E\left(\hat{\lambda}_{G}\right)=\lambda_{G}$.  Let $\iota_{1,G}$ and $\iota_{2,G}$ be the canonical inclusions of $I(\indep(G))$ and $M(E(G))$, respectively, into $X(G)$.  Then, there is a unique quiver map $p_{G}:X(G)\to G$ such that $p_{G}\circ\iota_{1,G}=\hat{\kappa}_{G}$ and $p_{G}\circ\iota_{2,G}=\hat{\lambda}_{G}$, the \emph{covering map} of $G$.
\end{defn2}

A routine check shows that $p_{G}$ is epic with $E\left(p_{G}\right)$ bijective.  The main result in this section is to show that $X(G)$ equipped with $p_{G}$ is the epi-projective cover of $G$.

\subsection{Epi-Projectivity}

From here forward, the term ``epi-projective'' will be used for being projective relative to the class of all epimorphisms.  This section will characterize the epi-projective quivers as precisely the disjoint union of an independent set of vertices with an independent set of edges.

\begin{prop}[Epi-Projective Characterization]\label{epi-proj}
A quiver $P$ is epi-projective in $\Quiv$ if and only if $P\cong I(S)\coprod M(T)$ for some sets $S$ and $T$.
\end{prop}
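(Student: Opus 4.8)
The plan is to prove both directions using the explosion construction $X(G) = I(\indep(G)) \coprod M(E(G))$ together with its covering map $p_G \colon X(G) \to G$, which (as noted) is epic with $E(p_G)$ bijective. For the ($\Leftarrow$) direction, I would first reduce, by closure of projectivity under isomorphism, to showing that each $I(S)$ and each $M(T)$ is epi-projective, and then that epi-projectivity is closed under coproducts. Given an epimorphism $\phi \colon A \to B$ and a map $\psi \colon I(S) \to B$, the lift is determined entirely on vertices: since $V(\phi)$ is onto, choose for each $s \in S$ a preimage under $V(\phi)$ of $V(\psi)(s)$; this defines a function $S \to V(A)$, which extends uniquely to a quiver map $\hat\psi \colon I(S) \to A$ by the universal property of $I(S)$, and $\phi \circ \hat\psi = \psi$ holds because it holds on vertices and $I(S)$ has no edges. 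The argument for $M(T)$ is the exact dual on the edge level: since $E(\phi)$ is onto, lift each generator edge through $E(\phi)$, invoke the universal property of $M(T)$, and check the source/target conditions are automatic because the lifted edge's endpoints map correctly down through the mono... through the epi $\phi$. Closure under coproducts is then a formal diagram chase: given $\psi \colon \coprod_i P_i \to B$, lift each restriction $\psi \circ \iota_i$ using projectivity of $P_i$ and assemble the lifts via the universal property of the coproduct.

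For the ($\Rightarrow$) direction, suppose $P$ is epi-projective. The key move is to apply the lifting property to the covering map $p_P \colon X(P) \to P$, which is an epimorphism, together with $\mathrm{id}_P \colon P \to P$. Epi-projectivity yields a quiver map $s \colon P \to X(P)$ with $p_P \circ s = \mathrm{id}_P$, so $P$ is a retract of $X(P)$. It then suffices to show that any retract of a quiver of the form $I(S) \coprod M(T)$ is again (isomorphic to) a quiver of that form. I would argue this concretely: write $X(P) = I(\indep(P)) \coprod M(E(P))$, analyze where $s$ can send vertices and edges of $P$, and observe that $s(P)$, being a sub-quiver picked out by a retraction, must itself decompose as an independent set of vertices together with an independent set of edges — essentially because in $I(S) \coprod M(T)$ no edge shares a vertex with any other edge, every vertex has in-degree and out-degree at most one, and a retract inherits these structural constraints. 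Concretely, $s \circ p_P$ is an idempotent endomorphism of $X(P)$ whose image is isomorphic to $P$, and one checks that the image of an idempotent on $I(S)\coprod M(T)$ is of the same form by splitting the vertex set into those hit by edge-images and the rest.

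The main obstacle I anticipate is the ($\Rightarrow$) direction's final structural step: showing cleanly that a retract (equivalently, the splitting of an idempotent) of $I(S) \coprod M(T)$ has the same form. One has to rule out that the retraction collapses two edges onto a shared vertex or creates a vertex of degree greater than one — the point is that the section $s$ cannot do this because composing back with $p_P$ must be the identity, so distinct edges of $P$ go to edges of $X(P)$ lying in distinct $M$-summands-worth of structure, and their endpoints in $X(P)$ are the artificially-separated copies, which are never shared. Making this bookkeeping rigorous without excessive case analysis is the delicate part; I would organize it by first noting $E(p_P)$ is bijective so $s$ restricted to edges is forced to be the inverse bijection onto the edge-part, then deducing the vertex behavior of $s$ from the source/target compatibility, and finally reading off that $\indep(P)$ together with $E(P)$ gives the required decomposition $P \cong I(\indep(P)) \coprod M(E(P))$ — wait, that would say every epi-projective $P$ is literally its own explosion, which is exactly the right conclusion since $X(P) \cong P$ precisely when $P$ is already of the stated form.
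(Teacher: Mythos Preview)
Your proposal is correct and follows essentially the same route as the paper: for ($\Leftarrow$) you lift vertices through $V(\phi)$ and edges through $E(\phi)$ using the universal properties of $I$, $M$, and the coproduct (the paper does this in one pass rather than factor-by-factor, but the content is identical), and for ($\Rightarrow$) you split $p_P$ via projectivity and then argue the section is an isomorphism using that $E(p_P)$ is bijective together with source/target compatibility, which is exactly the paper's argument. Your detour through ``retracts of $I(S)\coprod M(T)$'' is unnecessary---as you yourself realize at the end, the section $s$ is already a two-sided inverse to $p_P$, giving $P\cong X(P)=I(\indep(P))\coprod M(E(P))$ directly.
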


\begin{proof}

($\Leftarrow$) Let $S$ and $T$ be sets.  Define $P:=I(S)\coprod M(T)$, and let $\iota_{1}$ and $\iota_{2}$ be the canonical inclusions of $I(S)$ and $M(T)$, respectively, into $P$.  Consider the following diagram in $\Quiv$,
\[\xymatrix{
I(S)\ar[dr]^{\iota_{1}}	&	&	M(T)\ar[dl]_{\iota_{2}}\\
&	P\ar[d]_{\psi}\\
&	H	&	G\ar[l]^{\phi}\\
}\]
where $\phi$ is epic.  Then, both $V(\phi)$ and $E(\phi)$ are onto.  For each $s\in S$, choose $v_{s}\in V(G)$ such that $V(\phi)\left(v_{s}\right)=V\left(\psi\circ\iota_{1}\right)(s)$.  For each $t\in T$, choose $e_{t}\in E(G)$ such that $E(\phi)\left(e_{t}\right)=E\left(\psi\circ\iota_{2}\right)(t)$.  Define $\alpha:S\to V(G)$ by $\alpha(s):=v_{s}$ and $\beta:T\to E(G)$ by $\beta(t):=e_{t}$.  Then, there are unique quiver maps $\hat{\alpha}:I(S)\to G$ and $\hat{\beta}:M(T)\to G$ such that $V\left(\hat{\alpha}\right)=\alpha$ and $E\left(\hat{\beta}\right)=\beta$.  Furthermore, there is a unique quiver map $\gamma:P\to G$ such that $\gamma\circ\iota_{1}=\hat{\alpha}$ and $\gamma\circ\iota_{2}=\hat{\beta}$.

A calculation shows that for $s\in S$ and $t\in T$,
\[\begin{array}{ccc}
V\left(\phi\circ\gamma\circ\iota_{1}\right)(s)=V\left(\psi\circ\iota_{1}\right)(s)
&
\textrm{and}
&
E\left(\phi\circ\gamma\circ\iota_{2}\right)(t)
=E\left(\psi\circ\iota_{1}\right)(t).\\
\end{array}\]
By the universal properties of $I$ and $M$, $\phi\circ\gamma\circ\iota_{1}=\psi\circ\iota_{1}$ and $\phi\circ\gamma\circ\iota_{2}=\psi\circ\iota_{2}$.  By the universal property of the disjoint union, $\phi\circ\gamma=\psi$.

($\Rightarrow$) This direction of the proof will show that the covering map $p_{P}:X(P)\to P$ is an isomorphism.  This will be done by creating an inverse mapping.  Consider the following diagram in $\Quiv$.
\[\xymatrix{
&	X(P)\ar[d]^{p_{P}}\\
P\ar[r]_{id_{P}}	&	P\\
}\]
Since $P$ is epi-projective, there is a quiver map $\psi:P\to X(P)$ such that $p_{P}\circ\psi=id_{P}$.  As a result, $V\left(p_{P}\right)\circ V(\psi)=id_{V(P)}$ and $E\left(p_{P}\right)\circ E(\psi)=id_{E(P)}$.  This guarantees that $V(\psi)$ is one-to-one.  Since $E\left(p_{P}\right)$ is bijective, $E(\psi)=E\left(p_{P}\right)^{-1}$ is too.  A calculation shows that for all $e\in E(P)$,
\[\begin{array}{ccc}
V(\psi)\left(\sigma_{P}(e)\right)
=(1,(0,e))
&
\textrm{and}
&
V(\psi)\left(\tau_{P}(e)\right)
=(1,(1,e)).\\
\end{array}\]
For $v\in\indep(P)$,
\[
v=id_{V(P)}(v)
=\left(V\left(p_{P}\right)\circ V(\psi)\right)(v).
\]
Thus, $V(\psi)(v)\in V\left(p_{P}\right)^{-1}(v)=\{(0,v)\}$, giving $V(\psi)(v)=(0,v)$.  Therefore, $V(\psi)$ is onto, so $\psi$ is an isomorphism between $P$ and $X(P)$.

\end{proof}

With this characterization, $X(G)$ is guaranteed to be epi-projective for every quiver $G$.

\subsection{Epi-Coessential Maps and the Epi-Projective Cover}

Next, epi-coessential maps are characterized, which will consequentially yield that $p_{G}$ is epi-coessential.  Specifically, an epi-coessential map must be bijective on edges and independent vertices.

\begin{prop}[Epi-Coessential Map Characterization]\label{epi-coess}
An epic quiver map $\xymatrix{G\ar[r]^{\phi} & H}$ is epi-coessential if and only if the following conditions hold:
\begin{enumerate}
\item $E(\phi)$ is bijective;
\item if $v\in\indep(G)$, then $V(\phi)(v)\in\indep(H)$;
\item\label{epi-coess-3} if $w\in\indep(H)$, there is a unique $v\in\indep(G)$ such that $V(\phi)(v)=w$.
\end{enumerate}
\end{prop}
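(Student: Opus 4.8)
The plan is to unwind the definition of epi-coessential dually to Proposition \ref{mono-ess}: an epic quiver map $\phi:G\to H$ is epi-coessential precisely when, for every quiver $C$ and every $\alpha\in\Quiv(C,G)$, the fact that $\phi\circ\alpha$ is epic forces $\alpha$ to be epic. I would prove the two implications separately, and I would open with one small observation that drives both: since $\phi$ is epic, $V(\phi)$ is onto, and moreover any $G$-preimage of a vertex lying in $\indep(H)$ is itself independent in $G$ (otherwise $\phi$ would transport an edge-incidence at that vertex down to $H$, contradicting independence there).

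For the direction ``Conditions 1--3 imply epi-coessential'', I would take $\alpha:C\to G$ with $\phi\circ\alpha$ epic, so $V(\phi)\circ V(\alpha)$ and $E(\phi)\circ E(\alpha)$ are onto. Bijectivity of $E(\phi)$ (Condition 1) gives $E(\alpha)$ onto immediately. For vertices, I would first note that every non-independent vertex of $G$ is an endpoint of some edge $e\in E(G)$, and since $E(\alpha)$ is onto, $e=E(\alpha)(e')$ for some $e'$, so the matching endpoint of $e'$ maps onto that vertex; hence $V(G)\setminus\Ran(V(\alpha))\subseteq\indep(G)$. Supposing $v\in\indep(G)\setminus\Ran(V(\alpha))$, surjectivity of $V(\phi)\circ V(\alpha)$ produces $u\in\Ran(V(\alpha))$ with $V(\phi)(u)=V(\phi)(v)$ and $u\neq v$; if $u$ is non-independent in $G$ then $V(\phi)(v)=V(\phi)(u)$ is an endpoint of an edge of $H$, contradicting $V(\phi)(v)\in\indep(H)$ from Condition 2, while if $u\in\indep(G)$ then $u$ and $v$ are two distinct independent vertices of $G$ with the same image in $\indep(H)$, contradicting the uniqueness clause of Condition 3. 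So $V(\alpha)$ is onto and $\alpha$ is epic.

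For the converse I would argue the contrapositive, exhibiting in each failure case a proper subquiver inclusion $\alpha:C\hookrightarrow G$ that is not epic but has $\phi\circ\alpha$ epic (the exact dual of the quotient constructions used in Proposition \ref{mono-ess}). If $E(\phi)$ fails to be injective, pick $e_{1}\neq e_{2}$ with $E(\phi)(e_{1})=E(\phi)(e_{2})$ and let $C$ be $G$ with the edge $e_{2}$ deleted; $E(\phi)$ stays onto since anything formerly hit only by $e_{2}$ is still hit by $e_{1}$. If Condition 2 fails, take $v\in\indep(G)$ with $V(\phi)(v)\notin\indep(H)$ and delete $v$ (legal, as $v$ is independent); $V(\phi)$ stays onto because $V(\phi)(v)$, being an endpoint of some edge of $H$, is also hit by an endpoint of a preimage edge, which cannot equal $v$. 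If Condition 3 fails, the opening observation shows the failure must be of uniqueness, so there are distinct $v_{1},v_{2}\in\indep(G)$ with $V(\phi)(v_{1})=V(\phi)(v_{2})$; delete $v_{2}$ and route surjectivity of $V(\phi)$ through $v_{1}$. In every case the inclusion is not epic but becomes epic after composing with $\phi$.

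I expect the main obstacle to be the vertex-surjectivity step in the first direction: unlike edges, a vertex of $G$ need not lie in $\Ran(V(\alpha))$ merely because $\phi\circ\alpha$ is epic, so one must genuinely corner any missed vertex using independence together with Conditions 2 and 3 simultaneously. The transport lemma noting that a $\phi$-preimage of an independent vertex is automatically independent is what keeps this bookkeeping short, and I would isolate it at the start of the proof.
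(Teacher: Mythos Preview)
Your proposal is correct and follows essentially the same approach as the paper: both directions are argued exactly as you outline, with the $(\Leftarrow)$ direction handling non-independent vertices via edge endpoints and independent vertices via Conditions 2 and 3, and the $(\Rightarrow)$ direction done by contrapositive via proper subquiver inclusions (delete an edge for Condition 1, delete an independent vertex for Conditions 2 and 3). Your explicit isolation of the ``transport lemma'' (preimages of independent vertices are independent) is a nice touch that the paper leaves implicit, and it cleanly explains why the failure of Condition 3 must be a failure of uniqueness rather than existence.
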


\begin{proof}

($\Leftarrow$) Let $\xymatrix{A\ar[r]^{\alpha} & G}\in\Quiv$ satisfy that $\phi\circ\alpha$ is epic.  Then, $V(\phi)\circ V(\alpha)$ and $E(\phi)\circ E(\alpha)$ are onto.  Since $E(\phi)$ is bijective, $E(\alpha)$ is onto.

Consider $v\in V(G)$.  If there is $e\in E(G)$ such that $\sigma_{G}(e)=v$, then there is $f\in E(A)$ such that $E(\alpha)(f)=e$.  Note that
\[
V(\alpha)\left(\sigma_{A}(f)\right)=\left(\sigma_{G}\circ E(\alpha)\right)(f)
=\sigma_{G}(e)
=v.
\]
A similar situation occurs if $v=\tau_{G}(e)$ for some $e\in E(G)$.

If $v\in\indep(G)$, then $V(\phi)(v)\in\indep(H)$.  Then, there is $u\in V(A)$ such that $V(\phi\circ\alpha)(u)=V(\phi)(v)$.  If there was $e\in E(G)$ such that $\sigma_{G}(e)=\alpha(u)$, then a calculation shows $\left(\sigma_{H}\circ E(\phi)\right)(e)=V(\phi)(v)$, contradicting that $V(\phi)(v)\in\indep(H)$.  If $\tau_{G}(e)=\alpha(u)$ for some $e\in E(G)$, a similar contradiction results.  Therefore, $V(\alpha)(u)\in\indep(G)$, and $V(\alpha)(u)=v$ by Criterion \ref{epi-coess-3}.

Thus, $V(\alpha)$ is onto, and $\alpha$ is epic.

($\Leftarrow$) In each case, if the condition fails, an appropriate subquiver $N$ within $G$ is defined, so that the inclusion map $\iota:N\to G$ is not epic, but $\phi\circ\iota$ is.

\begin{itemize}
\item[\textbf{1 Fails:}] Assume there are $e,f\in E(G)$ such that $e\neq f$ and $E(\phi)(e)=E(\phi)(f)$.  Let $V_{N}:=V(G)$ and $E_{N}:=E(G)\setminus\{f\}$.  One can check that the restrictions of $\sigma_{G}$ and $\tau_{G}$ to $E_{N}$ map into $V_{N}$.

\item[\textbf{2 Fails:}] Assume $E(\phi)$ is bijective and there is $v\in\indep(G)$ and $e\in E(H)$ such that $\sigma_{H}(e)=V(\phi)(v)$.  Let $V_{N}:=V(G)\setminus\{v\}$ and $E_{N}:=E(G)$.  One can check that the restrictions of $\sigma_{G}$ and $\tau_{G}$ to $E_{N}$ map into $V_{N}$.

The case when there is $e\in E(H)$ such that $\tau_{H}(e)=E(\phi)(v)$ can be treated similarly.

\item[\textbf{3 Fails:}]Assume there are $v,w\in\indep(G)$ such that $v\neq w$ and $V(\phi)(v)=V(\phi)(w)$.  Let $V_{N}:=V(G)\setminus\{w\}$ and $E_{N}:=E(G)$.  One can check that the restrictions of $\sigma_{G}$ and $\tau_{G}$ to $E_{N}$ map into $V_{N}$.
\end{itemize}

\end{proof}

With this characterization, $p_{G}$ is epi-coessential by design.  This yields the characterization of the epi-projective cover of any quiver $G$.

\begin{cor}
Given any quiver $G$, $X(G)$ equipped with $p_{G}$ is the epi-projective cover of $G$.
\end{cor}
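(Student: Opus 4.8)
The plan is to assemble the corollary directly from the three ingredients already in hand: the epi-projective characterization (Proposition \ref{epi-proj}), the epi-coessential characterization (Proposition \ref{epi-coess}), and the definitions of $X(G)$ and $p_{G}$. Recall that a $\Phi$-projective cover of $G$ is a $\Phi$-projective object $P$ together with a $\Phi$-coessential morphism $P \to G$; here $\Phi$ is the class of all epimorphisms. So two things must be verified: that $X(G)$ is epi-projective, and that $p_{G}:X(G)\to G$ is epi-coessential.

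First I would note that $X(G) = I(\indep(G)) \coprod M(E(G))$ is literally of the form $I(S) \coprod M(T)$ with $S = \indep(G)$ and $T = E(G)$, so Proposition \ref{epi-proj} immediately gives that $X(G)$ is epi-projective. Second, I would check that $p_{G}$ satisfies the three criteria of Proposition \ref{epi-coess}. Criterion (1), that $E(p_{G})$ is bijective, is exactly the ``routine check'' already recorded in the text just after the definition of the covering map. For criterion (2), an independent vertex of $X(G)$ is, by the structure of a disjoint union, one coming from the $I(\indep(G))$ summand (the vertices $(0,v)$ for $v \in \indep(G)$); the summand $M(E(G))$ contributes no independent vertices since each $(0,e)$ is a source and each $(1,e)$ is a target of the edge $e$. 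Since $p_{G}$ sends $(0,v)$ to $v$ via $\hat{\kappa}_{G}$, and $v \in \indep(G)$ by construction, criterion (2) holds. For criterion (3), given $w \in \indep(G)$, I must exhibit a unique independent vertex of $X(G)$ over $w$: the vertex $(0,w)$ maps to $w$, and it is the only preimage of $w$ under $V(p_{G})$ at all (vertices from the $M(E(G))$ summand map to sources or targets of edges, hence cannot hit the independent vertex $w$), so in particular it is the unique independent one. Thus $p_{G}$ is epi-coessential.

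With both facts established, $X(G)$ equipped with $p_{G}$ is by definition an epi-projective cover of $G$; uniqueness up to isomorphism of projective covers (standard, see \cite[\S II.9.27]{joyofcats}) justifies the article ``the''. I do not expect any real obstacle here — the corollary is a bookkeeping consequence of the two preceding propositions. The only point requiring a moment's care is the description of $V(p_{G})^{-1}(w)$ for $w \in \indep(G)$: one must use that the two summands of the coproduct have disjoint vertex sets and that $\hat{\lambda}_{G}$ (the map out of $M(E(G))$) sends $(0,e) \mapsto \sigma_{G}(e)$ and $(1,e) \mapsto \tau_{G}(e)$, neither of which can equal an independent vertex. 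Everything else is immediate from the definitions.

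\begin{proof}
Write $S := \indep(G)$ and $T := E(G)$, so that $X(G) = I(S) \coprod M(T)$. By Proposition \ref{epi-proj}, $X(G)$ is epi-projective.

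It remains to check that $p_{G}$ satisfies the three criteria of Proposition \ref{epi-coess}. As noted after the definition of the covering map, $E\left(p_{G}\right)$ is bijective, giving criterion (1). For criteria (2) and (3), observe first that, since the vertex set of $X(G)$ is the disjoint union of the vertex sets of the two summands, a vertex of $X(G)$ lying in the $M(E(G))$ summand is of the form $(0,e)$ or $(1,e)$ for some $e \in E(G)$; such a vertex is not independent in $X(G)$, being the source or target of the corresponding edge. Hence every independent vertex of $X(G)$ comes from the $I(S)$ summand and has the form $(0,v)$ with $v \in S = \indep(G)$, and $V\left(p_{G}\right)$ sends $(0,v)$ to $\kappa_{G}(v) = v \in \indep(G)$. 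This gives criterion (2). Finally, by the description of $\hat{\lambda}_{G}$, a vertex of $X(G)$ from the $M(E(G))$ summand maps under $V\left(p_{G}\right)$ to $\sigma_{G}(e)$ or $\tau_{G}(e)$ for some $e \in E(G)$, neither of which is an independent vertex of $G$. Therefore, for $w \in \indep(H) = \indep(G)$, the only preimage of $w$ under $V\left(p_{G}\right)$ is $(0,w)$, which is independent in $X(G)$; this is criterion (3).

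By Proposition \ref{epi-coess}, $p_{G}$ is epi-coessential, so $X(G)$ equipped with $p_{G}$ is an epi-projective cover of $G$. Since the epi-projective cover is unique up to isomorphism, $X(G)$ with $p_{G}$ is the epi-projective cover of $G$.
\end{proof}
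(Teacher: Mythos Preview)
Your proposal is correct and follows exactly the approach the paper intends: invoke Proposition \ref{epi-proj} to see $X(G)$ is epi-projective, then verify the three criteria of Proposition \ref{epi-coess} to conclude $p_{G}$ is epi-coessential. The paper itself treats both verifications as immediate (``by design''), whereas you spell out the check of the criteria in detail; the only cosmetic slip is writing ``$\indep(H)$'' where you mean the target of $p_{G}$, which is $G$.
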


This fact codifies abstractly the statement that ``every graph is a quotient of an `independent' graph''.  Furthermore, since an epi-projective cover is unique up to isomorphism, any representation of it will do.  The following are a few examples of epi-projective covers for some common quivers.

\begin{ex}[Independent Sets]
For any sets $S$ and $T$, let
\[
P:=I(S)\coprod M(T).
\]
The quiver map $id_{P}$ from $P$ to itself is epi-coessential by Proposition \ref{epi-coess}.  Thus, $P$ with this map is an epi-projective cover of $P$.
\end{ex}

\begin{ex}[Bouquets]
Given any nonempty set $S$, let
\[
\mathbf{1}_{V(M(S))}:V(M(S))\to\mathbb{1}
\]
be the constant map.  Then, the quiver map $\left(\mathbf{1}_{V(M(S))},id_{S}\right)$ from $M(S)$ to $B(S)$ is epi-coessential by Proposition \ref{epi-coess}.  Thus, $M(S)$ with this map is an epi-projective cover of $B(S)$.
\end{ex}

\begin{ex}[Full Quivers]
Given any set $S$, define $\rho:V\left(M\left(S^{2}\right)\right)\to S$ by
\[
\rho(v):=\left\{\begin{array}{cc}
s,	&	v=(0,(s,t)),\\
t,	&	v=(1,(s,t)).\\
\end{array}\right.
\]
Then, the quiver map $\left(\rho,id_{S^{2}}\right)$ is epi-coessential from $M\left(S^{2}\right)$ to $K(S)$.  Thus, $M\left(S^{2}\right)$ with this map is an epi-projective cover of $K(S)$.
\end{ex}


\begin{thebibliography}{1}

\bibitem{joyofcats}
Ji{\v{r}}{\'{\i}} Ad{{\'a}}mek, Horst Herrlich, and George~E. Strecker.
\newblock Abstract and concrete categories: the joy of cats.
\newblock {\em Repr. Theory Appl. Categ.}, (17):1--507, 2006.
\newblock Reprint of the 1990 original [Wiley, New York; MR1051419].

\bibitem{borceux1}
Francis Borceux.
\newblock {\em Handbook of categorical algebra. 1}, volume~50 of {\em
  Encyclopedia of Mathematics and its Applications}.
\newblock Cambridge University Press, Cambridge, 1994.
\newblock Basic category theory.

\bibitem{graphtransformation}
H.~Ehrig, K.~Ehrig, U.~Prange, and G.~Taentzer.
\newblock {\em Fundamentals of algebraic graph transformation}.
\newblock Monographs in Theoretical Computer Science. An EATCS Series.
  Springer-Verlag, Berlin, 2006.

\bibitem{handbookgt}
Jonathan~L. Gross and Jay Yellen, editors.
\newblock {\em Handbook of graph theory}.
\newblock Discrete Mathematics and its Applications (Boca Raton). CRC Press,
  Boca Raton, FL, 2004.

\bibitem{seic1985}
Frederick Hoffman, Ronald~C. Mullin, Ralph~G. Stanton, and K.~Brooks Reid,
  editors.
\newblock {\em Proceedings of the sixteenth {S}outheastern international
  conference on combinatorics, graph theory and computing}, Winnipeg, MB, 1985.
  Utilitas Mathematica Publishing Inc.

\bibitem{williams-1971}
Keith~Kim Williams.
\newblock The category of graphs.
\newblock Master's thesis, Texas Tech University, May 1971.

\end{thebibliography}
\end{document}